\numberwithin{equation}{section}
\renewcommand{\geq}{\geqslant}
\renewcommand{\ge}{\geqslant}
\renewcommand{\leq}{\leqslant}
\renewcommand{\le}{\leqslant}
\theoremstyle{plain}
\newtheorem{THEOREM}{Theorem}[section]
\newtheorem{theorem}[THEOREM]{Theorem}
\newtheorem{lemma}[THEOREM]{Lemma}
\theoremstyle{definition}
\theoremstyle{remark}
\newcommand{\thm}[1]{Theorem~\ref{#1}}
\newcommand{\lem}[1]{Lemma~\ref{#1}}
\def \a {\alpha} 
\def \d {\delta}
\def \g {\gamma}
\def \e {\varepsilon}
\def \L {\Lambda}
\def \n {\nabla}
\def \r {\rho}
\def \cL {\mathcal{L}}
\def \cM {\mathcal{M}}
\def \cP {\mathcal{P}}
\def \cF {\mathcal{F}}
\def \aL {{\mathcal L}}
\def \aT {{\mathcal T}}
\newcommand{\Z}{\ensuremath{\mathbb{Z}}}   
\newcommand{\R}{\ensuremath{\mathbb{R}}}   
\newcommand{\T}{\ensuremath{\mathbb{T}}}   
\def \p {\partial}
\def \ra {\rightarrow}
\def \dx  {\, \mbox{d}x}
\def \dy  {\, \mbox{d}y}
\def \dz  {\, \mbox{d}z}
\def \dt  {\, \mbox{d}t}
\def \dr {\, \mbox{d}r}
\def \ddt  {\frac{\mbox{d\,\,}}{\mbox{d}t}}
\def \DDt  {\frac{\mbox{D\,\,}}{\mbox{D}t}}
\def \dD  {\mbox{D}}
\def \aL {\mathcal{L}}
\begin{document}

\title[Eulerian dynamics with a commutator forcing: fractional diffusion]{Eulerian dynamics with  a commutator forcing III.\\ Fractional diffusion of order $0<\alpha<1$}

\author{Roman Shvydkoy}
\address{Department of Mathematics, Statistics, and Computer Science, M/C 249,\\
University of Illinois, Chicago, IL 60607, USA}
\email{shvydkoy@uic.edu}

\author{Eitan Tadmor}
\address{Department of Mathematics, Center for Scientific Computation and Mathematical Modeling (CSCAMM), and Institute for Physical Sciences \& Technology (IPST), University of Maryland, College Park\newline
Current address: Institute for Theoretical Studies (ITS),
ETH-Zurich, Clausiusstrasse 47, CH-8092 Zurich, Switzerland}
\email{tadmor@cscamm.umd.edu}

\date{\today}

\subjclass{92D25, 35Q35, 76N10}

\keywords{flocking, alignment, fractional dissipation, Cucker-Smale, Motsch-Tadmor}

\thanks{\textbf{Acknowledgment.} Research was supported in part by NSF grants DMS16-13911, RNMS11-07444 (KI-Net) and ONR grant N00014-1512094 (ET) and by NSF grant DMS 1515705 and the College of LAS, University of Illinois at Chicago (RS). 
Both authors thank the Institute for Theoretical Studies (ITS) at ETH-Zurich for the hospitality.}

\dedicatory{To Edriss Titi with friendship and admiration}

\begin{abstract} We continue our study of hydrodynamic models of self-organized evolution of agents with singular interaction kernel $\phi(x) = |x|^{-(1+\alpha)}$. Following our works \cite{ST2017a,ST2017b} which focused on the range $1\leq \alpha <2$, and Do et. al. \cite{DKRT2017} which covered the range $0<\alpha<1$, in this paper we revisit the latter case and give a short(-er) proof of global in time existence of smooth solutions, together with a full description of their long time dynamics. Specifically, we prove that starting from any initial condition in $(\rho_0,u_0) \in H^{2+\alpha}\times H^3$, the solution approaches exponentially fast to a flocking state solution consisting of a  wave $\bar{\rho}=\rho_\infty(x-t\bar{u}))$ traveling with a  constant velocity determined by the conserved average velocity $\bar{u}$.  The convergence is accompanied by exponential decay of all higher order derivatives of $u$.
\end{abstract}

\maketitle

\setcounter{tocdepth}{2}
\tableofcontents

\section{Introduction and statement of main results.}
We  continue our study of one-dimensional Eulerian dynamics driven by forcing with  a commutator structure initiated in \cite{ST2017a,ST2017b}:
\begin{equation}\label{eq:FH}
\left\{
\begin{split}
\rho_t +(\rho u)_x&=0,\\
u_t+uu_x &=\aT(\rho,u).
\end{split}\right.
\end{equation}
The forcing $\aT(\rho,u)$  takes the form $\aT(\rho,u)=[\aL_\phi,u](\rho):=\aL_\phi(\rho u)-\aL_\phi(\rho)u$,  which involves the density $\rho$, the velocity $u$, and a convolution kernel $\phi$,
\begin{equation}\label{eq:aL}
\aL_\phi(f):=\int_{\R} \phi(|x-y|) (f(y)-f(x))\dy.
\end{equation}
The system arises as  the macroscopic  description for large-crowd  dynamics of $N \gg1$ ``agents" driven by binary  interactions through velocity \emph{alignment},  \cite{CS2007},
\begin{equation}\label{eq:CS}
\left\{
\begin{split}
\dot{x}_i&=v_i,\\
\dot{v}_i& =\frac{1}{N}\sum_{j=1}^N \phi(|x_i-x_j|)(v_j-v_i),
 \end{split}\right.
\qquad (x_i,v_i)\in \Omega\times \R, \quad i=1,2, \ldots, N.
\end{equation}
The kernel  $\phi$ regulates the binary interactions among agents in $\Omega$. In the original setting of \cite{CS2007},  $\phi$ is assumed  positive, bounded influence function.  Many aspects of the formal passage from \eqref{eq:CS} to \eqref{eq:FH} are discussed in e.g., \cite{HT2008,CCP2017} and references therein; consult \cite{Pes2015, PS2017} for singular $\phi$'s. The important dynamical feature of the model is encoded in its long time behavior describing a \emph{flocking phenomenon}, in which the crowd of agents  congregates within a finite diameter $D(t) = \sup_{i,j}|x_i(t) - x_j(t)|  < D_\infty <\infty$, while aligning their velocities, 
$ \sup_{i,j}|v_i(t) - v_j(t)|  \stackrel{t\rightarrow \infty}{\longrightarrow} 0$, thus  
approaching the conserved  average velocity, 
$v_j(t) \stackrel{t\rightarrow \infty}{\longrightarrow} \frac{1}{N} \sum_k v_k(0)$.
 Starting with the seminal work of Cucker and Smale paper \cite{CS2007} and the follow-up works \cite{HT2008, HL2009, MT2014, TT2014, CCTT2016, ST2017b} and reference therein, it has become clear that in order to achieve \emph{unconditional flocking} in either the agent-based or the macroscopic descriptions \eqref{eq:CS},\eqref{eq:FH},  the system has to  involve  long range interactions so that $\int \phi(r) \dr = \infty$. The drawback of such an assumption in the context of Cucker-Smale model \eqref{eq:CS} is that each agent has to ``count'' \underline{all} its $(N-1)$ neighbors, close and far with equal footing. To remove this deficiency, Motsch and Tadmor introduced in \cite{MT2011} an adaptive averaging protocol in which each neighboring agents is counted by its relative influence. Thus, the normalization pre-factor $1/N$ on the right of \eqref{eq:CS} is replaced by $1/\sum_j \phi(|x_i-x_j|)$,  leading to  the Eulerian dynamics \eqref{eq:FH} with \emph{non-symmetric} forcing  $\aT(\rho,u)= [\aL_\phi,u](\rho)/(\phi*\rho)$. The model is argued in \cite{MT2011} as more realistic in both --- close to and away from  equilibrium regimes, but its lack of symmetry  is less amenable to the spectral analysis available in the symmetric Cucker-Smale model \eqref{eq:CS}. An alternative approach was proposed by us in \cite{ST2017a,ST2017b}, where nearby interactions are highlighted by the singularity of the interaction kernel at the origin, thus ``adapting''  different footing of neighboring agents by placing substantially smaller weights to those agents at far away distances relative to those nearby.
A natural example  is given by the power-law singularities $|x|^{-(1+\a)}, \ \a >0$. We consider the system \eqref{eq:FH} on the torus $\T$  with the $2\pi$-periodized version of such  kernels\footnote{We can in fact have an arbitrarily large period.}
\[
\phi_\a(x) := \sum_{k\in \Z} \frac{1}{|x+ 2\pi k|^{1+\a}}, \qquad 0<\a< 2.
\]
which  preserve the essential long range but less dominant interactions.  In this case the operator $\aL_\a=\aL_{\phi_\a}$ becomes the (negative of) classical \emph{fractional Laplacian}, $\aL_\a=-\L^\a$, which we denote
\[
\L^\a u(x) = \int_\R (u(x) - u(x+z)) \frac{\dz}{|z|^{1+\a}} = \int_\T (u(x) - u(x+z)) \phi_\a(z) \dz, \quad \L^\a=(-\Delta)^{\a/2}.
\]
Here and below we assume that $u(\cdot,t)_{|\T}$ and likewise, $\rho(\cdot,t)_{|\T}$, are   extended periodically onto the line $\R$. The commutator forcing on the right hand side of the momentum equation in \eqref{eq:FH} then becomes a fractional elliptic operator:
\begin{equation}\label{eq:Ta}  \tag*{(\theequation)$_{\a}$}
\aT(\rho,u)= - [\L^\a,u](\rho)(x) = \int_\R \rho(x+z) (u(x+z) - u(x)) \frac{\dz}{|z|^{1+\a}}, \qquad 0<\a <2,
\end{equation}
with the density controlling uniform ellipticity. Written in this form,  system \eqref{eq:FH} resembles the fractional Burgers equation with non-local non-homogeneous dissipation. 

In \cite{ST2017a} we proved global existence of smooth solutions of \eqref{eq:FH}, \ref{eq:Ta} in the range  $1\leq \a \leq 2$, with focus on the most difficult critical case $\a=1$. 
To this end we utilized refined tools from regularity theory of fractional parabolic equations
to verify a Beale-Kato-Majda (BKM) type continuation criterion which guarantees that the solution can be extended beyond $T$ provided 
$\displaystyle \int_0^T |u_x(\cdot, t)|_\infty \dt <\infty$. 
Building upon the technique developed in \cite{ST2017a}, in \cite{ST2017b} we proved that all regular solutions converge exponentially fast to a so called \emph{flocking state}, consisting of a traveling wave, $\overline{\rho}(x,t)=\rho_\infty(x-t\overline{u})$, with a fixed speed $\overline{u}$,
\begin{equation}\label{e:flock}
| u(\cdot,t)- \bar{u}|_X + |\rho(\cdot,t) - \bar{\rho}(\cdot,t)|_Y  \stackrel{t\rightarrow \infty}{\longrightarrow}0, \qquad  \overline{u}:=\frac{\cP_0}{\cM_0}.
\end{equation}
Here the average velocity, $\overline{u}$, is dictated by the conserved mass and momentum, 
\[
\cM_0=\int_\T \rho_0(x) \dx, \quad \cP_0=\int_\T (\rho_0u_0)(x) \dx.
\]
\ifx
. To recall the concept we introduce the set of flocking state solutions consisting of constant velocities, $\bar{u}$, and  traveling density waves
$\bar{\rho}=\rho_\infty(x-t\bar{u})$:
\begin{equation}\label{ }
\cF = \{ ( \bar{u}, \bar{\rho}):  \bar{u} \equiv \mbox{constant}, \bar{\rho}(x,t) = \rho_\infty(x-t \bar{u}) \}.
\end{equation}
We say that a solution $(u(\cdot,t),\rho(\cdot,t))$ converges to a flocking state  $(\bar{u},\bar{\rho}) \in \cF$ in space $X\times Y$ if

If the rate of decay in \eqref{e:flock} is exponential the process is accompanied by the classical \emph{fast alignment}, where $u \ra \bar{u}$ exponentially fast. In the present case of symmetric interactions, the conservation of averaged mass and momentum,
\[
\cM(t):=\frac{1}{2\pi}\int_{\T} \rho(x,t)dx \equiv \cM_0, \qquad
 \cP(t):=\frac{1}{2\pi} \int_{\T} (\rho u)(x,t)dx  \equiv \cP_0
 \]
implies that a limiting flocking velocity is given by $\bar{u}=\cP_0/\cM_0$.
\fi

Parallel to the release of \cite{ST2017a,ST2017b}, Do et.al. in \cite{DKRT2017} treated the case $0<\a<1$, where they proved global existence result with fast alignment of velocities. Although on the face of it, the system for that $\a$-range seems supercritical, one can employ the known conservation law for $e = u_x - \L^\a \rho$ to conclude a priori uniform $C^{1-\a}$ H\"older  regularity of the velocity, so that  the equation  \eqref{eq:FH}, \ref{eq:Ta} is kept critical in the range $0<\a<1$. In \cite{DKRT2017}, the authors use construction of a modulus of continuity, the celebrated method implemented in treating many critical equations such as Burgers and, most notably, critical SQG equation by Kiselev et. al. \cite{KNV2007}, in order to verify a Beale-Kato-Majda type  criterion $\displaystyle \int_0^T |\rho_x(\cdot,t)|^2_\infty \dt <\infty$, to guarantee continuation of the solution  beyond $T$.

In this present paper we revisit the parameter range  $0<\a<1$ using the fractional parabolic technique developed in our earlier works for the range $1\leq \a < 2$. As in \cite{ST2017b}, our methodology will be to extract \emph{quantitative enhancement} estimates for the dissipation term, using an adaptation of the non-linear maximum principle as in Constantin and Vicol's proof for the critical SQG, \cite{CV2012}, that  yields global existence and, moreover, allows us to  completely describe the long time behavior --- exponential convergence towards a flocking state. The main result  summarized in the following theorem covers the global regularity and flocking behavior for singular kernels in the unified range $0<\a<2$. The ($1\leq \a<2$)-part of the theorem was covered already in \cite[Theorem 1.3]{ST2017b}, quoted in \eqref{e:flock} with $(X,Y)=(H^3,H^{2+\a})$. The ($0< \a<1$)-part of the theorem is new.

\begin{theorem}[Flocking for singular kernels of fractional order $\a\in (0,2)$]\label{t:flock-singular}
Consider the system \eqref{eq:FH},\ref{eq:Ta} with singular kernel $\phi_\alpha(x) =|x|^{-(1+\a)}$, $0 < \a <2$, on the periodic torus $\T$, subject to  initial conditions $(\rho_0,u_0) \in  H^{2+\a} \times H^3  $ away from the vacuum. Then it admits a unique global solution  $(\rho,u)\in L^\infty([0,\infty);  H^{2+\a} \times H^3)$. Moreover, the solution converges exponentially fast to a flocking state $\bar{\rho}=\rho_\infty(x-t\bar{u})\in H^{2+\a}$ traveling with a finite speed $\bar{u}$, so that for any $s<2+\a$ there exists $C=C_s, \delta=\delta_s$ with
\begin{equation}\label{e:urxxx}
 |u(t)-\bar{u}|_{H^3} +|\rho(t) - \bar{\rho}(t) |_{H^{s}} \leq C e^{-\d t}, \qquad t>0, \qquad 
 \overline{u}:= \frac{\cP_0}{\cM_0}.
\end{equation}
\end{theorem}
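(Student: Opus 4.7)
The plan is to handle the new case $0<\a<1$ in four steps parallel to those of \cite{ST2017b}: (i) local well-posedness in $H^{2+\a}\times H^3$ together with a Beale--Kato--Majda type continuation criterion; (ii) propagation of uniform-in-time two-sided bounds on the density; (iii) a non-linear maximum principle in the spirit of Constantin--Vicol \cite{CV2012} yielding uniform control of $|\r_x|_\infty$; and (iv) exponential decay to a flocking state. Local well-posedness and the continuation criterion $\int_0^T(|u_x|_\infty + |\r_x|_\infty)\dt<\infty$ go through by standard commutator and energy estimates for \eqref{eq:FH}, essentially as in the $\a\ge 1$ case treated in \cite{ST2017a}.

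The pivotal a priori input is the transport identity for $e:=u_x-\L^\a\r$. Differentiating the momentum equation and combining it with the continuity equation, one verifies $e_t+(ue)_x=0$; in particular $|e(t)|_\infty\le|e(0)|_\infty$ and $\int_\T e\dx=0$. Substituting $u_x=e+\L^\a\r$ into the continuity equation yields the fractional porous-medium type equation
\[
\r_t + u\r_x + \r\,\L^\a\r = -\r e,
\]
with a bounded right-hand side. Evaluating at the maximum of $\r$ and invoking the C\'ordoba--C\'ordoba drop estimate $\L^\a\r(x_{\max})\ge c\,\r_{\max}^{1+\a}\cM_0^{-\a}$ gives a uniform upper bound $\r\le\rmax$; the matching lower bound $\r\ge\rmin$ follows from the Lagrangian representation $\r(X(t),t)=\r_0(X_0)\exp\!\big(-\int_0^t u_x(X(s),s)\ds\big)$ together with a bootstrap on $\int_0^t|u_x|_\infty\ds$. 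Consequently $|u_x|_\infty\lesssim 1+|\L^\a\r|_\infty$, and the task reduces to a single-derivative estimate on $\r$.

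The main obstacle, and the main novelty compared with \cite{ST2017b}, is upgrading the H\"older regularity of $u$ (immediate from $|e|_\infty\le C$) to a uniform bound on $|\r_x|_\infty$, \emph{without} resorting to the modulus-of-continuity method of \cite{DKRT2017}. I would track the evolution of $|\r_x|^2$, which inherits the fractional dissipation $\r\,\L^\a(\r_x^2)$ from the porous-medium structure above. At a maximum point $\bar x(t)$ of $|\r_x|^2$, the uniform bound $\r\ge\rmin$ combined with the non-linear maximum principle of Constantin--Vicol \cite{CV2012} produces a dissipative lower bound nonlinear in $|\r_x(\bar x)|$ that dominates the quadratic production terms $u_x\r_x^2$ and $\r u_{xx}\r_x$, each of which is in turn expressed through $u_x=e+\L^\a\r$ and $u_{xx}=e_x+\L^\a\r_x$ and absorbed using the commutator bounds. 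This gives an absorbing differential inequality, and hence a uniform-in-time bound on $|\r_x|_\infty$; together with $|u_x|_\infty\lesssim 1+|\L^\a\r|_\infty$ this closes the BKM criterion and delivers the global $H^{2+\a}\times H^3$ regularity, the remaining Sobolev norms being propagated by commutator estimates and Gr\"onwall exactly as in the $\a\ge 1$ range.

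For the flocking statement, the uniform bound $\r\ge\rmin$ yields a pointwise alignment estimate at the extrema of $u$: at a point $x_M(t)$ of maximum of $u(\cdot,t)$, $\aT(\r,u)(x_M)\le -\rmin\L^\a u(x_M)$, and $\L^\a u(x_M)\ge c\big(\max_x u-\min_x u\big)$ follows from the uniform positive lower bound of the periodized kernel $\phi_\a$ on $\T$. A symmetric estimate at the minimum of $u$ produces $\tfrac{d}{dt}\big(\max_x u-\min_x u\big)\le -c\rmin\big(\max_x u-\min_x u\big)$, hence exponential decay of $|u-\bar u|_\infty$. Interpolating with the uniform $H^3$ bound gives exponential decay in every $H^s$ with $s<3$, and the corresponding decay of $|\r(\cdot,t)-\bar\r(\cdot,t)|_{H^s}$ for $s<2+\a$ is then read off from the continuity equation, yielding \eqref{e:urxxx}.
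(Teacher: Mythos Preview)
Your outline correctly identifies step (iii) as the crux, but the resolution you sketch does not close. The Constantin--Vicol nonlinear maximum principle alone is insufficient here: at the maximum of $|\r'|$ the dissipative term $\r\,\r'\L^\a\r'$ yields only $\gtrsim |\r'_+|^{2+\a}$, whereas the production term $|\r'_+|^2\,\L^\a\r(x_+)$ is of size $\lesssim |\r'_+|^3$ (since for $\a<1$ one has $|\L^\a\r|_\infty\lesssim |\r'|_\infty+|\r|_\infty$). Because $2+\a<3$, the dissipation is \emph{weaker} than this term for large $|\r'_+|$, and rewriting $u_x=e+\L^\a\r$, $u_{xx}=e_x+\L^\a\r_x$ does not change the power count; your phrase ``absorbed using the commutator bounds'' hides exactly this supercritical deficit. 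What the paper actually uses is an additional a priori input you do not invoke: the parabolic Schauder-type estimate of Silvestre \cite{S2012,SS2016} furnishes a uniform $C^\g$ bound on $\r$ for some $\g>0$. This $C^\g$ regularity is then fed into a scale decomposition of $\L^\a\r(x_+)$ with a cutoff at radius $r$: the intermediate and far scales are bounded via $C^\g$, while the inner region is integrated by parts to produce a term controlled by $(\dD_\a\r')^{1/2}r^{1-\a/2}$, which after Young is absorbed back into dissipation. Choosing $r\sim |\r'_+|^{-1}$ yields $\p_t|\r'_+|^2\lesssim |\r'_+|^2+|\r'_+|^{2+\a-\g}-c|\r'_+|^{2+\a}$, which now closes. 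Without the $C^\g$ input this step genuinely fails.

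There is a second, smaller gap in step (iv): interpolating exponential decay of $|u-\bar u|_\infty$ against a merely uniform $H^3$ bound gives decay in $H^s$ only for $s<3$, not the $H^3$ decay asserted in \eqref{e:urxxx}. The paper reaches the endpoint by proving, in sequence, exponential decay of $|u'|_\infty$, then $|u''|_\infty$, then $|u'''|_2$; each stage relies on an enhancement of the dissipation $\dD_\a u^{(k)}$ by the already exponentially small amplitude of $u^{(k-1)}$ (Lemma~\ref{l:enhance}), and the top-order $L^2$ estimate must be coupled to the equation for $e''$ to close. Your interpolation shortcut bypasses this hierarchy and therefore does not deliver the stated $H^3$ rate for $u$.
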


We recall that the global existence part for $0<\a <1$ was first derived  in Do et. al. \cite{DKRT2017}. Our alternative proof  is along the lines of -- and in fact simpler to handle than,  the borderline case $\a=1$ in \cite{ST2017a}. The result is a consequence of  \lem{l:rslope} below, which gives a direct control on BKM continuation criteria $|\rho_x(\cdot,t)|_\infty$, and consequently on $|u_x(\cdot,t)|_\infty$, uniformly in time. Most of our work is then devoted  for obtaining quantitative bounds on  long time behavior of the slopes and higher order derivatives of the solution in the ($0<\a<1$)-part of the theorem.

\section{Preliminary a priori bounds}
We start by listing several structural features of the system \eqref{eq:FH},\ref{eq:Ta} and some preliminary a priori bounds of its solutions. We refer to \cite{DKRT2017,ST2017a,ST2017b} for details.

$\bullet$ (\emph{Control of higher order regularity}). The starting point is the conservation law for a new quantity :
\begin{equation}\label{e:e}
e_t + (ue)_x = 0, \qquad e:= u_x - \L^\a\rho.
\end{equation}
Paired with the mass equation we find that the ratio $ {e}/{\rho}$ satisfies the transport equation
\begin{equation}\label{ }
\DDt  ({e}/{\rho}) := (\p_t +u\p_x)  ({e}/{\rho}) = 0.
\end{equation}
Hence, starting from sufficiently smooth initial condition with $\rho_0$ away from vacuum, this gives a priori  pointwise bound
\begin{equation}\label{erho0}
|e(x,t)| \lesssim \rho(x,t).
\end{equation}
This argument can be bootstrapped to higher orders \cite[Sec. 2]{ST2017a}:  the next order quantity  $Q= (e/\rho)_x / \rho$ is transported
\begin{equation}\label{e:Q}
(\p_t +u\p_x) Q = 0, \qquad Q:= (e/\rho)_x / \rho
\end{equation}
hence solving for $e'(\cdot,t)$ we obtain the a priori pointwise bound
\begin{equation}\label{erho1}
|e'(x,t)| \lesssim |\rho'(x,t)| + \rho(x,t).
\end{equation}
This can be  iterated to any order yielding the high-order bounds
\begin{equation}\label{erhok}
|e^{(k)}(x,t)| \lesssim |\rho^{(k)}(x,t)| +\ldots + \rho(x,t), \qquad k=0.1.2., \ldots.
\end{equation}
 As observed in \cite{ST2017a}, the smallest order $L^2$-based regularity class for which \eqref{e:Q} can be understood classically, and hence  \eqref{erho0} holds at every point is the class $u\in H^3$, and \eqref{erho0} is the lowest order law among \eqref{erhok} which allows to close energy estimates. The corresponding regularity class for density $\rho$ follows from its connection to $u$ through the $e$-quantity which itself is of lower order. Hence, $\rho \in H^{2+\a}$. Indeed, it is proved in \cite{ST2017a} for $1\leq \a < 2$ and  in \cite{DKRT2017} for $0<\a<1$, that for any initial condition   $(\rho_0, u_0) \in H^{2+\a}\times  H^3 $ away from vacuum there exists a unique local solution in the same class $(\rho,u)\in L^\infty([0,T);  H^{2+\a}\times H^3 )$. We note that since the argument \cite{ST2017a} for $1\leq \a<2$  is not using the dissipative structure of the commutator term, it can be easily adapted to the case $0<\a<1$. Both results \cite{ST2017a} and \cite{DKRT2017} are accompanied by a BKM type continuation criterion which enables to extend the solution beyond any finite $T$.

$\bullet$ (\emph{Pointwise bound on the density}). We have the pointwise lower- and upper-bound on the density  globally on the interval of existence
\begin{equation}\label{e:densb}
0< c_0 \leq \rho(x,t) \leq C_0, \qquad  x\in\T, \ t \geq 0,
\end{equation}
where the constants $c_0$ and $C_0$ depend only on the initial condition.  This was established in \cite{ST2017b} following a weaker lower bound $\rho \gtrsim 1/(1+t)$ found in \cite{ST2017a,DKRT2017}.  

$\bullet$ (\emph{Strong alignment}). The variation of the velocity,  $\max_{y} u(y,t)-\min_{y} u(y,t)$, is contracting  exponentially fast,
\[
\ddt V(t)\le -   c_1 V(t), \qquad  V(t):=\max_{y}u(y,t) - \min_{y} u(y,t),
\]
hence there is an exponentially fast alignment of velocities to their average value   $u(x,t) \rightarrow \bar{u}=\cP_0/\cM_0$. 

$\bullet$ (\emph{Fractional parabolic enhancement}).  The parabolic nature of both the momentum and mass equations is an essential structural feature of the system that has been used in all of the preceding works. Using the $e$-quantity we can write
\begin{equation}
\rho_t+u\rho_x+e\rho= -\rho \L^\a \rho.
\end{equation}
The drift $u$ and the forcing $e \rho$ are bounded a priori due to the maximum principle  stated above. Moreover,  utilizing the boundedness of  $\rho$ and of $e=u_x-\L^\a\rho$ we immediately conclude for $0<\a < 1$  that $u(\cdot,t)\in C^{1-\a}$ uniformly in time. Hence, the mass  equation falls under the general class of fractional parabolic equations,
\[
w_t + b \cdot\n_x w = \cL_\a w + f, \qquad \cL_\a w(x)= \int_\R K(x,z,t) (w(x+z)-w(x))\dz
\]
with a diffusion operator associated with the  singular kernel
$K(x,z,t) = \rho(x+z) |z|^{-(1+\a)}$, and $f\in L^\infty$, $b\in C^{1-\a}$. Regularity of these equations has been the subject of active research in recent years. In particular, the result of  Silvestre \cite{S2012}, see also  Schwab and Silverstre \cite{SS2016}, states that  there exists a $\g>0$ such that for all $t>0$,
\begin{equation}\label{e:gamma}
| \rho|_{C^\g(\T \times [1,2))} \lesssim  |\r|_{L^\infty(0,2)} + |\rho e |_{L^\infty(0,2)}.
\end{equation}
Since the right hand side is uniformly bounded on the entire line we have obtained uniform bounds on $C^\g$-norm starting, by rescaling, from any positive time.

\section{Proof of the main result}

\subsection{Existence of global smooth solutions}
We begin with proving a uniform bound $|\rho_x(\cdot,t)|_\infty <\infty$.  In particular, we then have a uniform bound on $|\L^\a \rho|_\infty$, $e$, and hence on $|u'|_\infty$ and this readily implies global existence by the BKM criterion $\displaystyle \int_0^T |u_x(\cdot,t)|\dx <\infty$.
To simplify notations, we now use ${\{\cdot\}}^{'}, {\{\cdot\}}^{''}$ and so on to denote spatial differentiation.

\begin{lemma}\label{l:rslope} Under the assumptions stated of Theorem \ref{t:flock-singular} the following  uniform bound holds
\begin{equation}\label{ }
\sup_{t \ge 0 }  |\rho'(\cdot,t) |_\infty  <  \infty.
\end{equation}
\end{lemma}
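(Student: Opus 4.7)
\emph{Plan.} I would run a non-linear maximum principle in the spirit of Constantin--Vicol \cite{CV2012} on the scalar quantity $M(t):=|\rho'(\cdot,t)|_\infty$. Rewriting the continuity equation as $\rho_t+u\rho_x+\rho e=-\rho\L^\a\rho$ and differentiating once in $x$, substitution of $u_x=e+\L^\a\rho$ produces the transport--diffusion equation
\begin{equation*}
(\p_t+u\p_x)\rho'+\rho\,\L^\a\rho'=-2e\rho'-2\rho'\L^\a\rho-\rho e'.
\end{equation*}
Pick $\bar x(t)$ at which $\rho'$ attains its positive maximum $M$ (the negative-minimum case is symmetric under $\rho'\mapsto-\rho'$). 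Then $\rho''(\bar x)=0$, so the transport term vanishes, and combining the equation with the a priori bounds $|e|\lesssim 1$, $|e'|\lesssim M+1$ from \eqref{erho0}--\eqref{erho1} together with $c_0\le\rho(\bar x)\le C_0$ from \eqref{e:densb} yields
\begin{equation*}
\dot M+c_0\,\L^\a\rho'(\bar x)\le C(1+M)+2M\,|\L^\a\rho(\bar x)|.
\end{equation*}

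The first key step is a lower bound on the dissipation. Because $|\rho'|\le M$ pointwise while $\int_0^h\rho'(\bar x+s)\ds=\rho(\bar x+h)-\rho(\bar x)\le 2|\rho|_\infty$, picking $h\sim 1/M$ forces $\rho'(\bar x+\cdot)\le M/2$ on a subset $E$ of $[0,h]$ of measure at least $h/2$ (a Chebyshev-type counting argument). Restricting the defining integral for $\L^\a\rho'(\bar x)$ to $E$ yields
\begin{equation*}
\L^\a\rho'(\bar x)\gtrsim M\cdot h^{-\a}\gtrsim M^{1+\a}.
\end{equation*}

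The second key step is an upper bound on the bad zero-order term $|\L^\a\rho(\bar x)|$: I would interpolate the Lipschitz-type bound $|\rho(\bar x+z)-\rho(\bar x)|\le M|z|$ (which follows from $|\rho'|\le M$) against the uniform Hölder bound $|\rho(\bar x+z)-\rho(\bar x)|\le [\rho]_{C^\g}|z|^\g$ supplied by Silvestre's estimate \eqref{e:gamma}. Splitting the integral at the crossover $r=M^{-1/(1-\g)}$ and optimizing gives, for any $0<\g<\a$,
\begin{equation*}
M\,|\L^\a\rho(\bar x)|\le CM^{(1+\a-2\g)/(1-\g)}.
\end{equation*}
A direct computation shows $(1+\a-2\g)/(1-\g)<1+\a$ if and only if $\a<1$, which is precisely the subcritical hypothesis of the theorem. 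Plugging in, one obtains
\begin{equation*}
\dot M\le C(1+M)+CM^{(1+\a-2\g)/(1-\g)}-c\,M^{1+\a},
\end{equation*}
in which the $M^{1+\a}$ dissipation dominates for $M$ large, pushing $M(t)$ into an absorbing ball and yielding the claimed uniform bound.

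The main obstacle is the tight exponent book-keeping in the final inequality: both the bad term $M\,|\L^\a\rho|$ and the dissipation $\L^\a\rho'$ naively scale like $M^{1+\a}$, and the whole argument rests on squeezing out a \emph{strict} inequality between their exponents. This is achieved only by combining the uniform Hölder gain of Silvestre (which shaves the exponent of $|\L^\a\rho|$ below $\a$) with the subcritical condition $\a<1$; without either ingredient the two $M^{1+\a}$ terms would compete on equal footing and one would be left with a fragile sharp-constant comparison.
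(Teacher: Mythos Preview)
Your argument is correct and follows the same overall skeleton as the paper's proof: differentiate the mass equation, evaluate at the extremum of $\rho'$, invoke the Constantin--Vicol nonlinear maximum principle for the dissipative term $\rho\,\L^\a\rho'$ to produce $\gtrsim M^{1+\a}$, and use Silvestre's $C^\g$ bound \eqref{e:gamma} to control the dangerous term $M\cdot|\L^\a\rho|$ by a sub-leading power of $M$.

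The genuine difference lies in how the near-field part of $\L^\a\rho$ is handled. The paper splits $\L^\a\rho$ with a cutoff at scale $r$, then in the inner region writes $|z|^{-1-\a}=-\tfrac{1}{\a}\p_z(z|z|^{-1-\a})$ and integrates by parts; this produces a cross term $(\dD_\a\rho')^{1/2}r^{1-\a/2}$ that is partially absorbed back into the dissipation, and leads (after choosing $r\sim 1/M$) to the ODE $\p_t M^2\lesssim M^2+M^{2+\a-\g}-M^{2+\a}$. Your route is more direct: you simply bound $|\rho(\bar x+z)-\rho(\bar x)|\le M|z|$ in the near field and $\lesssim|z|^\g$ in the far field, optimise the crossover at $r=M^{-1/(1-\g)}$, and obtain the sub-leading exponent $(1+\a-2\g)/(1-\g)$. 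This avoids the integration-by-parts/absorption trick entirely, at the cost of a slightly smaller gap to the dissipation exponent---your gap is $\g(1-\a)/(1-\g)$ versus the paper's $\g$---but either gap suffices. Your closing remark that the strict inequality of exponents hinges on both the Silvestre gain and the hypothesis $\a<1$ is exactly the point; indeed $(1+\a-2\g)/(1-\g)<1+\a$ is equivalent to $\a<1$.

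One small bookkeeping remark: your Chebyshev step claims a set $E\subset[0,h]$ of measure $\ge h/2$; a careful count (using $\rho'\ge -M$ on the complement) gives only a fixed positive fraction of $h$ once $h\sim C_0/M$ with a sufficiently large constant, but this is harmless for the conclusion $\L^\a\rho'(\bar x)\gtrsim M^{1+\a}$.
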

\begin{proof}
Taking the derivative of the density equation we obtain
\[
\p_t \rho' + u \rho'' + u' \rho' + e'\rho + e \rho' = - \rho' \L^\a\rho - \rho \L^\a \rho',
\]
and expressing, $u' = e + \L ^\a \rho$, we rewrite the $\rho'$-equation as
\[
\p_t \rho' + u \rho'' + e'\rho + 2e \rho' = - 2\rho' \L^\a \rho - \rho \L^\a \rho'.
\]
Multiplying by $\rho'$ and evaluating the equation at the point $x_+$ which maximize 
$|\rho'(x_+,t)|=\max_x |\rho'(x,t)|$ we obtain
\begin{equation}\label{e:rho-dera}
\frac12 \p_t |\rho'_+|^2 + e'_+ \rho_+ \rho'_+ + 2 e_+ |\rho'_+|^2 = -2 |\rho'_+|^2 \L^\a \rho_+ - \rho_+ \rho'_+ \L^\a \rho'_+ =:-2 |\rho'|^2\cdot I + II.
\end{equation}
In view of \eqref{e:densb} and \eqref{erho1} the whole nonlinear term on the left hand side can be estimated by
\[
\big|e'_+ \rho_+ \rho'_+ + 2 e_+ |\rho'_+|^2\big| \leq c_2 |\rho'_+|^2.
\]
Next, in view of the lower-bound  $\rho \geq c_0$, we have
\begin{equation}\label{e:disslower}
II= \rho_+ \rho'_+ \L^\a \rho'_+ \geq \frac{1}{2} c_0 \dD_\a\rho'(x_+), 
\end{equation}
where
\[
\dD_\a\rho'(x) := \int_\R  \frac{|\rho'(x) - \rho'(x+z)|^2}{|z|^{1+\a}} \dz.
\]
By the nonlinear maximum principle of  \cite{CV2012}, at the maximal point $x=x_+$ we have 
\[
\dD_\a\rho'(x_+) \geq c_3 \frac{|\rho'_+|^{2+\a}}{|\rho|_\infty} \geq c_4 |\rho'_+|^{2+\a},
\]
and hence 
\begin{equation}\label{e:rho-derx}
II = - \rho_+ \rho'_+ \L^\a \rho'_+ \leq -c_{5}|\rho'_+|_\infty^{2+\a}, \qquad c_{5}=\frac{1}{2}c_0c_4.
\end{equation}
We now get back to estimating the term $I=\L^\a \rho$ in \eqref{e:rho-dera}.  The estimates are not restricted to the maximal point $x_+$ so we temporarily drop the subscript $\{\cdot\}_+$. Let $\psi\in C^\infty$ be the usual even cut-off function with $\psi(z) = 1$ for $|z|<1$ and $\psi(z) = 0$ for $|z|>2$. Denote $\psi_r(z) = \psi(z/r)$,  and decompose
\[
\begin{split}
\L^\a \rho(x)  & =  \int \psi_r(z) \frac{\rho(x) - \rho(x+z)}{|z|^{1+\a}} \dz + \int_{|z| <2\pi} (1-\psi_r(z)) \frac{\rho(x) - \rho(x+z)}{|z|^{1+\a}} \dz \\
&\ \ \ + \int_{2\pi <|z|} (1-\psi_r(z))  \frac{\rho(x) - \rho(x+z)}{|z|^{1+\a}} \dz =: I_1+I_2+I_3.
\end{split}
\]
The last integral, $I_3$, is bounded by a constant multiple of $|\rho|_\infty$, which is uniformly bounded, $\leq c_6$. In the intermediate integral we use $C^\g$-regularity of $\rho$ and the fact that the region of integration is restricted to $|z|>r$. So, we obtain
\[
I_2= \left| \int_{|z| <2\pi} (1-\psi_r(z)) \frac{\rho(x) - \rho(x+z)}{|z|^{1+\a}} \dz \right| \leq c_7 r^{\g-\a}.
\]
For the first small-scale integral, we use that $|z|^{-1-\a} = -\frac{1}{\a} \p_z( z |z|^{-1-\a})$ and integrate by parts to obtain
\[
I_1=  \int \psi_r(z) \frac{\rho(x) - \rho(x+z)}{|z|^{1+\a}} \dz = \frac{1}{\a} \int \psi'_r(z) \frac{\rho(x) - \rho(x+z)}{|z|^{1+\a}} z \dz -  \frac{1}{\a} \int \psi_r(z) \frac{\rho'(x+z)}{|z|^{1+\a}} z \dz .
\]
In the first integral we use $C^\g$ regularity to  obtain an upper-bound $\lesssim r^{\g-\a}$; as to the second, since $\psi_r$ is even we can add the term $\rho'(x)$ inside,
\[
 \frac{1}{\a} \int \psi_r(z) \frac{\rho'(x+z)}{|z|^{1+\a}} z dz = \frac{1}{\a} \int \psi_r(z) \frac{\rho'(x+z)-\rho'(x)}{|z|^{1+\a}} z \dz ,
\]
and using H\"older, the last integral does not exceed $\leq c_8 (\dD_\a \rho')^{1/2}(x) r^{1-\a/2}$. Putting all these estimates of $I_1,I_2$ and $I_3$ together, we obtain the bound for the nonlinear term
$-2|\rho'|^2 I$, 
\begin{align}
\begin{split}\label{e:Drhs}
\big| |\rho'_+|^2 \L^\a \rho_+ \big| & \lesssim c_6 |\rho'_+|^2 + c_7 |\rho'_+|^2 r^{\g - \a} + c_8  |\rho'_+|^2 (\dD_\a \rho'_+)^{1/2}(x) r^{1-\a/2} \\
& \leq c_6 |\rho'_+|^2 + c_7 |\rho'_+|^2 r^{\g - \a} + \frac{c_0}{4}  \dD_\a \rho'(x_+) + c_9 r^{2-\a} |\rho'|^4.
\end{split}
\end{align}
The third term on the right, $\displaystyle \frac{c_0}{4}  \dD_\a \rho'(x_+)$ is absorbed into \eqref{e:disslower}, leaving us with the dissipation of $\displaystyle \frac{1}{2}II \leq -\frac{c_5}{2} |\rho'_+|^{2+\a}$ in \eqref{e:rho-derx}. 
Setting $\displaystyle r = \frac{c_{10}}{ |\rho'_+|}$ with sufficiently small $c_{10}$, we see that the second and fourth terms on the right hand side of \eqref{e:Drhs} are absorbed into the dissipation term $\frac{1}{2}II$. With such choice of $r$, the final bound of \eqref{e:rho-dera}  reads,
\begin{equation}\label{e:rho-derb}
\p_t |\rho'_+|^2 \leq c_{11}|\rho'_+|^2 +c_{12} |\rho'_+|^{2+\a - \g} - c_{13} |\rho'_+|^{2+\a},
\end{equation}
which implies the claimed control of $|\rho'(\cdot,t)|_\infty$.
\end{proof}

\subsection{Main theorem --- step 1: exponential decay towards a flocking state}

 To establish the stated exponential decay of $|u_x(\cdot,t)|$ we first prepare with the following refinement of the nonlinear maximum principle, \cite{CV2012}
extending \cite[Lemma 3.3]{ST2017b}.

\begin{lemma}[Enhancement of dissipation by small amplitudes] \label{l:enhance} Let $u \in C^1(\T)$ be a given function with amplitude $V = \max u - \min u$. There is an absolute constant $c_1 >0$ such that the following pointwise estimate holds
\begin{equation}\label{eq:x1}
\dD_\a u'(x) = \int_\R  \frac{|u'(x) - u'(x+z)|^2}{|z|^{1+\a}} \dz \geq c_1 \frac{|u'(x)|^{2+\a}}{V^\a}, \qquad  V=\max u-\min u. 
\end{equation}
In addition, there is an absolute constant $c_2>0$ such that for all $B>0$ one has
\begin{equation}\label{eq:x2}
\dD_\a u'(x) \geq B |u'(x)|^2 - c_2 B^{\frac{1+\a}{\a}} V^2.
\end{equation}
\end{lemma}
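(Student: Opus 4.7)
The plan is to extend the Constantin--Vicol nonlinear maximum principle \cite{CV2012} so that the usual ambient $|u|_\infty$ bound is replaced by the sharper oscillation $V$. Fix $x\in\T$, set $m:=|u'(x)|$, and for $r>0$ let $\chi_r(z)=\chi(z/r)$, where $\chi$ is a smooth even cutoff vanishing for $|z|\le 1/2$ and equal to $1$ for $|z|\ge 1$. First I would restrict the integration defining $\dD_\a u'(x)$ to the support of $\chi_r$ and expand the square, dropping the non-negative $|u'(x+z)|^2$ contribution, to reach
\[
\dD_\a u'(x)\ \ge\ m^2\!\int \chi_r(z)\,|z|^{-1-\a}\,\dz\ -\ 2u'(x)\!\int \chi_r(z)\,u'(x+z)\,|z|^{-1-\a}\,\dz.
\]
The first integral equals $c_\star r^{-\a}$ by scaling. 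For the cross term I would integrate by parts via $u'(x+z)=\partial_z u(x+z)$: since $\chi_r(z)/|z|^{1+\a}$ is even in $z$, its $z$-derivative is odd, so any constant may be subtracted from $u(x+z)$ inside the integral. The natural choice is the midpoint $\bar u:=(\max u+\min u)/2$, which yields $|u(x+z)-\bar u|\le V/2$; combined with the scaling estimate $\int|\partial_z(\chi_r/|z|^{1+\a})|\,\dz\lesssim r^{-(1+\a)}$, this controls the cross term by $\lesssim mV\,r^{-(1+\a)}$. Altogether I would arrive at the two-parameter inequality
\[
\dD_\a u'(x)\ \ge\ c_\star\, m^2\, r^{-\a}\ -\ C_\star\, m V\, r^{-(1+\a)}.
\]

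With this in hand, \eqref{eq:x1} follows by choosing $r=KV/m$ with $K$ large enough that $c_\star K-C_\star>0$: both scaling factors become positive multiples of $m^{2+\a}/V^\a$, and their difference is the claimed $c_1 m^{2+\a}/V^\a$. For \eqref{eq:x2} I would choose $r$ so that the $m^2$-coefficient on the right matches $B$ (up to absolute constants), i.e., $r^{-\a}\sim B$; then the cross term is of order $mV\,B^{(1+\a)/\a}$, and a standard Young's inequality splits it into $\tfrac12 Bm^2$ plus a pure $V^2$-remainder carrying the advertised power of $B$. Equivalently (and somewhat more slickly), \eqref{eq:x2} can be deduced as a direct consequence of \eqref{eq:x1} via a simple dichotomy: either $m\ge(B/c_1)^{1/\a}V$, in which case \eqref{eq:x1} already dominates $Bm^2$, or else $Bm^2$ is itself bounded by a constant times $V^2$ times a power of $B$, so that \eqref{eq:x2} reduces to the trivial $\dD_\a u'(x)\ge 0$.

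The only delicate ingredient is the constant-subtraction step in the integration by parts --- precisely the device that converts the naive $|u|_\infty$ bound (useless for the long-time argument, since it need not decay) into the oscillation $V$, which \emph{does} decay exponentially by the strong-alignment a priori bound recalled earlier. The evenness of $\chi_r/|z|^{1+\a}$ is what makes this subtraction legitimate; once it is in place, the remainder of the proof is a routine scaling computation together with a single choice of $r$.
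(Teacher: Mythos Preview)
Your argument is correct and follows essentially the same route as the paper: restrict to $|z|\gtrsim r$, drop the nonnegative $|u'(x+z)|^2$ term, integrate the cross term by parts to replace $u'(x+z)$ by $u(x+z)$ minus a constant (the paper subtracts $u(x)$, you subtract the midpoint --- either choice yields a factor of $V$), and then optimize in $r$. The only differences are cosmetic: your cutoff convention, your choice of subtracted constant, and your alternative dichotomy derivation of \eqref{eq:x2}. One small remark: the Young step (and your dichotomy) actually produces the exponent $B^{(2+\a)/\a}$ on the $V^2$ term, matching the paper's \emph{proof} rather than the exponent $B^{(1+\a)/\a}$ printed in the statement; this discrepancy is a typo in the paper and immaterial for the applications.
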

\begin{proof} Let $\psi_r$ be as in the proof of \lem{l:rslope}. Discarding the positive term $|u(x+z)|^2$ we obtain
\[
\begin{split}
\dD_\a u'(x) & \geq \int_{|z| >r} (1- \psi_r(z)) \frac{ |u'(x)|^2 - 2 u'(x+z) u'(x) }{|z|^{1+\a}} \dz\\
& = c_1 |u'(x)|^2 r^{-\a} - 2 u'(x)\int_{|z|>r} (1- \psi_r(z)) \frac{u'(x+z)}{|z|^{1+\a}} \dz.
\end{split}
\]
Now, using  $u'(x+z)  \equiv (u(x+z) - u(x))_z$ we integrate by parts in the second integral to obtain
\[
\begin{split}
\lefteqn{\int_{|z|>r} (1- \psi_r(z)) \frac{u'(x+z)}{|z|^{1+\a}} \dz} \\
& \quad  = \int_{r<|z|<2r} \psi_r'(z) \frac{u(x+z) - u(x)}{|z|^{1+\a}} \dz +(1+\a) \int_{|z|>r} (1-\psi_r(z)) \frac{u(x+z) - u(x)}{|z|^{3+\a}} z\dz.
\end{split}
\]
Both integrals are bounded by a constant multiple of $V r^{-(1+\a)}$. Hence
\[
\dD_\a u'(x) \geq  c_1 |u'(x)|^2 r^{-\a}- c_2 |u'(x)| V r^{-(1+\a)}.
\]
Picking $\displaystyle r = \frac{2c_2 V}{c_1 |u'(x)|}$ we obtain \eqref{eq:x1}. Picking 
$\displaystyle r = B^{-(1/\a)}$ and using Young's inequality,
\[
\dD u'(x) \geq c_1 B |u'(x)|^2 - c_2 |u'(x)|V B^{\frac{1+\a}{\a}} \geq c_3 B |u'(x)|^2 - c_4 B^{\frac{2+\a}{\a}} V^2,
\]
we obtain  \eqref{eq:x2}.
\end{proof}

\begin{lemma} \label{l:vanslope} Under the assumptions of Theorem \ref{t:flock-singular} there exist constants $C, \d>0$ such that for all $t>0$ one has
\begin{equation}\label{ }
|u'(\cdot,t) |_\infty \leq Ce^{-\d t}.
\end{equation}
\end{lemma}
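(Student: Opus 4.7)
The plan is to derive a pointwise differential inequality for $A(t)^2 := |u'(\cdot,t)|^2_\infty$ at the maximizing point $x_+(t)$ and reduce it to an ODE of the form $\tfrac12 \p_t A^2 \leq -A^2 + C e^{-\mu t}$, using the enhancement estimate \eqref{eq:x2} of \lem{l:enhance} together with the exponentially decaying velocity amplitude $V(t) \leq V_0 e^{-c_1 t}$ furnished by strong alignment. The payoff of \eqref{eq:x2} is that the free parameter $B$ can be chosen large enough to beat the non-dissipative cubic term produced by the Burgers-type nonlinearity $(u')^2$, paying only an exponentially small $V^2$-correction.

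Differentiating the momentum equation and applying the commutator identity twice yields
\[
\p_t u' + (u')^2 + u u'' = \aT(\rho,u') + \aT(\rho',u).
\]
At $x_+(t)$ either $u'(x_+)=0$ (nothing to prove) or $u''(x_+)=0$, so the transport term $u u''$ drops out. Multiplying by $u'(x_+)$ gives
\[
\tfrac12 \p_t A^2 = -(u'(x_+))^3 + u'(x_+)\aT(\rho,u')(x_+) + u'(x_+)\aT(\rho',u)(x_+).
\]
For the first commutator I would apply the pointwise identity $2a(b-a) = b^2-a^2-(b-a)^2$ with $a = u'(x_+)$ and $b = u'(x_++z)$: since $b^2 \leq a^2$ at the maximum and $\rho \geq c_0$,
\[
u'(x_+)\,\aT(\rho,u')(x_+) \leq -\tfrac{c_0}{2}\, \dD_\a u'(x_+).
\]

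For the drift commutator I would use $|\rho'|_\infty \lesssim 1$ from \lem{l:rslope} together with the interpolation $|u(x_++z)-u(x_+)| \leq \min(K|z|, V)$, where $K := \sup_t |u_x(\cdot,t)|_\infty < \infty$; splitting at the optimal scale $r \sim V/K$ gives $|\aT(\rho',u)(x_+)| \lesssim V^{1-\a}$, hence $|u'(x_+)\, \aT(\rho',u)(x_+)| \leq \e A^2 + C_\e V^{2(1-\a)}$ by Young. Substituting \eqref{eq:x2} with $B$ chosen so that $\tfrac{c_0 B}{2}$ exceeds $K + \e + 1$ absorbs both $|(u'(x_+))^3| \leq K A^2$ and the Young slack $\e A^2$, producing
\[
\tfrac12 \p_t A^2 \leq -A^2 + C V^2 + C V^{2(1-\a)} \leq -A^2 + C e^{-2(1-\a)c_1 t},
\]
from which Gronwall yields $A(t) \leq C e^{-\d t}$ with $\d = \min(1,(1-\a)c_1)>0$. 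I expect the delicate point to be the sign analysis of the dissipative commutator: the cubic $-(u'(x_+))^3$ has indefinite sign and must be beaten by the linear-in-$A^2$ dissipation coming from the amplitude-based form \eqref{eq:x2}---which is precisely why \eqref{eq:x2}, rather than the direct $|u'|^{2+\a}/V^\a$ bound \eqref{eq:x1}, is the right tool in this regime, the factor $V^\a$ being too small once $V$ becomes exponentially small while $A$ has not yet started to decay.
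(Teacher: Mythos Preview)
Your proof is correct and follows essentially the same strategy as the paper: evaluate the $u'$-equation at the maximizing point, extract $-\tfrac{c_0}{2}\dD_\a u'$ from the dissipative commutator $u'\aT(\rho,u')$, control the cubic Burgers term using the uniform bound $|u'|_\infty\le K$, and close via the enhancement lemma together with the exponential decay of $V$.

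Two minor remarks. First, your closing comment about \eqref{eq:x1} being the wrong tool is backwards: the paper in fact uses \eqref{eq:x1} (not \eqref{eq:x2}) to absorb the cubic term, writing $|u'|^3\lesssim |u'|^{2+\a}\le (V^\a/c_1)\,\dD_\a u'$, and the point is precisely that the prefactor $V^\a$ becomes \emph{small} after finite time, so the cubic is swallowed by a fraction of the dissipation. Second, for the drift commutator $\aT(\rho',u)u'$ the paper uses a cruder split at $|z|=2\pi$, obtaining $c_2|u'|_\infty^2+E$ and then absorbing the quadratic piece via \eqref{eq:x2}; your sharper $V^{1-\a}$ bound via the optimal cut at $r\sim V/K$ works just as well and is a nice refinement.
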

\begin{proof}
Differentiating the $u$-equation and evaluating at a point of maximum we find
\begin{equation}\label{eq:cubic}
\ddt |u'|^2 \leq |u'|^3 + \aT(\rho', u) u'  + \aT(\rho, u') u', \qquad \aT(\rho,u):=-\L^\a(\rho u)+u\L^\a(\rho).
\end{equation}
Pertaining to the dissipation term, let us observe
\[
(u'(y) - u'(x))u'(x)  = - \frac12 |u'(y) - u'(x)|^2 + \frac12( |u'(y)|^2 - |u'(x)|^2 )\le - \frac12 |u'(y) - u'(x)|^2.
\]
Thus, in view of density bounds \eqref{e:densb},
\[
\aT(\rho, u') u'  (x) \le - c_1 \dD_\a u'(x).
\]
The dissipation encoded in  $-c_1\dD_\a u'(x)$ cannot control the full cubic term $|u'|^3$ on the right of \eqref{eq:cubic}; yet as noted earlier, the term $|u'|$  is uniformly bounded (by the bounds of $|\L^\a\rho|_\infty$ and $|e|_\infty$) and in view of the enhancement \lem{l:enhance},
\[
|u'|^3 \lesssim |u'|^{2+\a} \lesssim V^\a(t) D_\a u', \qquad V(t)= \max_y u(y,t)-\min_y u(y,t).
\] 
Thus, the latter bound on $|u'|^3$  can be absorbed into dissipation term, at least  after a finite time at which $V(t)$ becomes small enough below certain threshold, $V(t) < c_1$. 

Let us turn to the remaining term $\aT(\rho', u) u' $.   We have
\[
\begin{split}
| \aT(\rho', u) u' | & = |u'| \int_{|z|<2\pi}| \rho'(x+z)| \frac{|u(x+z) -u(x)|}{|z|^{1+\a}} \dz \\
& \ \ \ + |u'| \int_{|z|>2\pi}| \rho'(x+z)| \frac{|u(x+z) -u(x)|}{|z|^{1+\a}} \dz\\
& \leq |u'|_\infty^2 |\rho'|_\infty + |u'|_\infty|\rho'|_\infty V \leq c_2|u'|_\infty^2 + E,
\end{split}
\]
where $E$ denotes a generic exponentially decaying quantity. In view of \eqref{eq:x2}, the quadratic term gets absorbed into dissipation leaving only exponentially decaying source term:
\[
\ddt |u'|^2 \leq E - c_3 |u'|^2,
\]
for all $t > t_0$ for some large $t_0$. The result follows by integration.
\end{proof}

We are now ready to prove existence of a flocking pair, at this stage in rough spaces.

\begin{lemma}\label{l:flock-bdd} Under the assumptions of \thm{t:flock-singular} there exist $C,\d >0$  and a flocking pair $(\bar{u}, \bar{\rho}) \in \cF$, $\bar{\rho} \in C^{1-\e}$, for every $\e>0$, such that
\begin{equation}\label{e:rhobar}
 |\rho(\cdot,t) - \bar{\rho}(\cdot,t) |_{\infty} \leq C e^{-\d t}, \qquad t>0.
\end{equation}
Thus, $\cF$ contains all limiting states of the system \eqref{eq:FH}.
\end{lemma}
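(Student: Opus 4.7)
The plan is to work in the comoving frame in which the flocking state is stationary. I define
\[
\tilde\rho(x,t) := \rho(x+\bar u\, t, t), \qquad \tilde u(x,t) := u(x+\bar u\, t,t) - \bar u,
\]
and a direct computation from the mass equation shows $\tilde\rho_t + (\tilde\rho\,\tilde u)_x = 0$. The entire problem then reduces to showing that $\tilde\rho_t$ is pointwise exponentially small, so that $\tilde\rho(\cdot,t)$ is Cauchy in $L^\infty(\T)$ and its uniform limit provides the desired $\rho_\infty$.

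Every ingredient is already in place. By \lem{l:vanslope} and the strong alignment estimate recalled in Section 2, both $|\tilde u_x|_\infty = |u_x|_\infty$ and $|\tilde u|_\infty \leq V(t)$ decay like $e^{-\d t}$, while by \eqref{e:densb} and \lem{l:rslope} both $|\tilde\rho|_\infty$ and $|\tilde\rho_x|_\infty$ are bounded uniformly in $t$. Combining these,
\[
|\tilde\rho_t(\cdot,t)|_\infty \leq |\tilde\rho_x|_\infty |\tilde u|_\infty + |\tilde\rho|_\infty |\tilde u_x|_\infty \leq C e^{-\d t},
\]
hence $|\tilde\rho(\cdot,t_1) - \tilde\rho(\cdot,t_2)|_\infty \leq C e^{-\d t_1}$ whenever $t_2 \ge t_1 \ge 0$ and $\tilde\rho(\cdot,t)$ converges uniformly on $\T$ to some $\rho_\infty \in L^\infty(\T)$. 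Sending $t_2\to\infty$ and shifting back to the original frame yields $|\rho(x,t) - \rho_\infty(x - t\bar u)|_\infty \leq C e^{-\d t}$, which is precisely \eqref{e:rhobar} with $\bar\rho(x,t) := \rho_\infty(x-t\bar u) \in \cF$.

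For the regularity statement, the uniform Lipschitz bound on $\tilde\rho(\cdot,t)$ from \lem{l:rslope} passes to the uniform limit, so $\rho_\infty$ is Lipschitz, in particular in $C^{1-\e}$ for every $\e>0$. Because $\bar u = \cP_0/\cM_0$ is determined by the conserved mass and momentum independently of any choice of subsequence, every flocking limit of the system must be of the form just produced, so $\cF$ captures all limiting states. The only mild friction I anticipate is bookkeeping the several decay rates $\d$ (arising from $V(t)$, from $|u'|_\infty$, and from the target estimate) into a single rate; beyond that, the Cauchy-in-$L^\infty$ argument needs no sharp input not already available.
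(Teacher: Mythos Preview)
Your argument is essentially identical to the paper's: both pass to the comoving frame, use the exponential decay of $|u-\bar u|_\infty$ and $|u_x|_\infty$ together with the uniform bounds on $\rho$ and $\rho_x$ to conclude $|\tilde\rho_t|_\infty \leq Ce^{-\delta t}$, and then run the Cauchy-in-$L^\infty$ argument. Your treatment of the regularity of $\rho_\infty$ (uniform Lipschitz bounds pass to the uniform limit) is slightly more direct than the paper's appeal to compactness, but the content is the same.
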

\begin{proof} The proof is identical to one given in \cite{ST2017b}. We include it for completeness.
Clearly, the velocity goes to its natural limit $\bar{u} = \cP_0/ \cM_0$.  We pass to the moving reference frame and denote $\widetilde{\rho}(x,t) := \rho(x+ t \bar{u},t)$. We see that $\widetilde{\rho}$ satisfies
\[
\widetilde{\rho}_t + (u - \bar{u}) \widetilde{\rho}_x + u_x \widetilde{\rho} = 0,
\]
where all the $u$'s are evaluated at $x+ t \bar{u}$. According to the established bounds we have $\displaystyle | \widetilde{\rho}_t |_\infty < C e^{-\d t}$.
This proves that $\widetilde{\rho}(\cdot, t) $ is Cauchy as $t \ra \infty$, and hence there exists a unique limiting state, $\rho_\infty(x)$, such that
\[
| \widetilde{\rho} (\cdot,t) - \rho_\infty(\cdot)|_\infty < C_1 e^{-\d t}.
\]
Denoting $\bar{\rho}(\cdot,t)=\rho_\infty(x-t\bar{u})$
 completes the proof of \eqref{e:rhobar}. The membership of $\bar{\rho}$ in $C^{1-\e}$ follows from \lem{l:rslope} and the compactness.
\end{proof}

\subsection{Main theorem --- step 2: decay of higher derivatives}

We start by showing exponential decay of $|u''|_\infty$. As before we denote by $E= E(t)$ any quantity with an exponential decay. For example, at this point we know that $|u'|_\infty =E$ and $V = E$.  According to \lem{l:enhance} applied to $u''$, we have the following pointwise bounds
\begin{equation}\label{e:Duxx}
\begin{split}
\dD_\a u''(x) & \geq \frac{|u''(x)|^{2+\a}}{E},\\
\dD_\a u''(x) & \geq B |u''(x)|^2 - C(B) E.
\end{split}
\end{equation}
Due to these bounds the dissipation term absorbs all $(2+\a)$-power terms $C|u''|^{2+\a}$ as well as quadratic terms with bounded coefficients $C|u''|^2$, and by Young any linear term $E|u''|$ with exponentially decaying coefficient. The cost of this absorbing is a free source term of type $E$.

\begin{lemma} \label{l:vanslope2} Under the assumptions of Theorem \ref{t:flock-singular}, there are constants $C, \d>0$ such that for all $t>0$ one has
\begin{equation}\label{ }
|u''(\cdot, t) |_\infty \leq Ce^{-\d t}.
\end{equation}
\end{lemma}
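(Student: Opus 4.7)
The plan is to iterate the argument of \lem{l:vanslope} at one higher order. Differentiating $u_t+uu'=\aT(\rho,u)$ twice in space and evaluating at a point $x_+$ realizing $|u''(\cdot,t)|_\infty$, the third-derivative transport term vanishes and one obtains
\[
\tfrac12 \p_t u''(x_+)^2 = -3u'(x_+)\,u''(x_+)^2 + u''(x_+)\,\aT(\rho,u)''(x_+).
\]
By bilinearity of $\aT$ and commutation of $\L^\a$ with $\p_x$, the forcing admits the Leibniz decomposition $\aT(\rho,u)''=\aT(\rho,u'')+2\aT(\rho',u')+\aT(\rho'',u)$, whose diagonal piece generates the dissipation $u''(x_+)\aT(\rho,u'')(x_+)\le -\tfrac{c_0}{2}\dD_\a u''(x_+)$ by the symmetrization $a(b-a)\le-\tfrac12(b-a)^2$ and the lower density bound $\rho\ge c_0$.

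The transport contribution $-3u'|u''|^2$ is of type $E(t)\,|u''|^2$ via \lem{l:vanslope}. The cross term is controlled pointwise by splitting the $z$-integral in $\aT(\rho',u')$ at $|z|=1$ with $|u'(x+z)-u'(x)|\le\min(|z|\,|u''|_\infty,\,2|u'|_\infty)$ and using the uniform $L^\infty$-bound on $\rho'$ from \lem{l:rslope}:
\[
|u''(x_+)\,\aT(\rho',u')(x_+)| \lesssim |u''|_\infty^2 + E\,|u''|_\infty.
\]
Both the quadratic and linear pieces (as well as the transport contribution $E|u''|^2$) are absorbable: the quadratic pieces by the enhancement \eqref{eq:x2} applied to $u''$ with $B$ chosen large enough to dominate the bare constants, the linear piece by Young's inequality, each at the cost of an exponentially decaying source.

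The main obstacle is the mixed term $u''(x_+)\,\aT(\rho'',u)(x_+)$, since $\rho''$ does not decay in time and is not in $L^\infty$ a priori when $\a<\tfrac12$. The saving is that the velocity variation is exponentially small by \lem{l:flock-bdd}, whence $|u(x+z)-u(x)|\le\min(V(t),|z||u'|_\infty)=E\cdot\min(1,|z|)$, so
\[
|\aT(\rho'',u)(x_+)| \;\lesssim\; E\!\int_{|z|<1}\!|\rho''(x_++z)|\,|z|^{-\a}\dz + E\!\int_{1\le|z|\le\pi}\!|\rho''(x_++z)|\,\phi_\a(z)\dz.
\]
The far-field integral is bounded by a multiple of $|\rho''|_{L^1(\T)}$ uniformly. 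The near-field integral is bounded, for $\a<\tfrac12$, by Cauchy--Schwarz against $|\rho''|_{L^2(\T)}$ (since $|z|^{-2\a}$ is then integrable near the origin), and for $\a\ge\tfrac12$ by $|\rho''|_{L^\infty}\,\int|z|^{-\a}\dz$ via the Sobolev embedding $H^{2+\a}\hookrightarrow C^{1+\a-\frac12}$; in either case the uniform-in-time $H^{2+\a}$ bound on $\rho$ yields $|\aT(\rho'',u)(x_+)|\lesssim E$. Young then gives $|u''\,\aT(\rho'',u)|\le\e|u''|^2+C_\e E^2$, the quadratic piece once more absorbable by the enhancement. Summing all contributions, for all $t$ past some $t_0$,
\[
\p_t |u''(\cdot,t)|_\infty^2 \le -c_3\,|u''(\cdot,t)|_\infty^2 + C\,E(t),
\]
which upon integration yields the claimed exponential decay.
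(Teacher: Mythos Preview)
Your treatment of the dissipation, the transport term, and the cross term $\aT(\rho',u')u''$ is fine and matches the paper's reasoning. The gap is in the mixed term $u''(x_+)\,\aT(\rho'',u)(x_+)$. You invoke a \emph{uniform-in-time} bound on $\rho$ in $H^{2+\a}$ (equivalently on $|\rho''|_{L^2}$ or $|\rho''|_{L^\infty}$), but at this stage of the argument no such bound is available: the global existence furnished by the BKM criterion after \lem{l:rslope} only gives $\rho(\cdot,t)\in H^{2+\a}$ for each $t$, with no control on the growth of the norm. The uniform $H^{2+\a}$ bound is established only later, in \lem{l:vanslope3}, and that lemma uses the present one as input. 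So the appeal to $|\rho''|_{L^2}$ or $|\rho''|_{L^\infty}$ makes the argument circular; the exponential smallness of the velocity increment does not help if the $\rho''$ factor could grow at an uncontrolled rate.

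The paper circumvents this by never estimating $\rho''$ directly. Instead, one writes
\[
\aT(\rho'',u)(x_+) = u'(x_+)\!\int \rho''(x_++z)\frac{z}{|z|^{1+\a}}\dz + \int \rho''(x_++z)\,\frac{u(x_++z)-u(x_+)-zu'(x_+)}{|z|^{1+\a}}\dz,
\]
and integrates by parts in $z$ in both pieces, transferring the extra derivative off of $\rho''$. The first integral becomes $-\a\,\L^\a\rho'(x_+) = -\a(e'-u'')(x_+)$, which is controlled since $|e'|\lesssim |\rho'|+\rho$ is bounded by \eqref{erho1} and \lem{l:rslope}; the second produces integrals involving only $\rho'(x_++z)$ against first- and second-order increments of $u$, estimable by $|\rho'|_\infty(|u'|_\infty+|u''|_\infty)$. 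All resulting terms are then of type $E|u''|^2+E|u''|+c|u''|^2$, absorbable by the enhancement as you already argued. Replacing your direct $\rho''$ estimate with this integration-by-parts step closes the proof.
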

\begin{proof} Evaluating the $u''$-equation at a point of maximum and performing the same initial steps as in \lem{l:vanslope} we obtain
\begin{equation}\label{e:uxx}
\ddt |u''|^2 \leq E |u''|^2 - c_0 \dD_\a u''(x) + \aT(\r'', u) u'' + 2 \aT(\rho',u')u''.
\end{equation}
As elaborated above, the quadratic term can be absorbed into dissipation by cost of an exponentially decaying source:
\[
\ddt |u''|^2 \leq E - c_1 \dD_\a u''(x) + \aT(\r'', u) u'' + 2 \aT(\rho',u')u''.
\]
We now focus on $\aT(\r'', u) u''$. Unfortunately, at this point we do not have any uniform control on $|\rho''|$. Thus, we will need to move one or $1-\a$ derivative from $\rho''$.  To achieve this we add and subtract $z u'
(x)$ inside the integral. We obtain
\begin{align*}
\aT(\r'', u) u'' = &  \ u''(x)u'(x) \int \rho''(x+z) \frac{z}{|z|^{1+\a}} \dz \\
 & + u''(x) \int  \rho''(x+z) (u(x+z) - u(x)-zu'(x)) \frac{\dz}{|z|^{1+\a}} \\
 & =:u''(x)u'(x)\cdot I+u''(x)\cdot II.
\end{align*}
We now integrate by parts  both integrals, $I$ and $II$.  In the first we obtain
\[
\begin{split}
I= \int \rho''(x+z) \frac{z}{|z|^{1+\a}} \dz &= \int (\rho'(x+z) - \rho'(x))_z \frac{z}{|z|^{1+\a}} \dz \\
 & = \a \int (\rho'(x+z) - \rho'(x)) \frac{\dz}{|z|^{1+\a}}  = - \a \L^\a \rho'(x).
\end{split}
\]
Note that $\L^\a \rho'(x) = e' - u''$, and $|e'|\lesssim |\rho'| < C$. Consequently,
\[
|u''(x)u'(x)\cdot I| = | - \a \L^\a \rho'(x) u''(x)u'(x) | \leq  E |u''|^2 + E|u''|,
\]
both are absorbed into dissipation with an extra $E$-term.  In the second integral, we obtain
\begin{equation}\label{e:aux3}
\begin{split}
II = & \ -   \int \rho'(x+z) (u'(x+z) - u'(x)) \frac{\dz}{|z|^{1+\a}} \\
&+ c \int  \rho'(x+z) (u(x+z) - u(x)-zu'(x)) \frac{z \dz}{|z|^{3+\a}}
\end{split}
\end{equation}
Splitting each integral into $|z|<2\pi$ and $|z|>2\pi$ regions, and using Taylor in the small scale regions we immediately obtain the bound
$\lesssim  |\rho'| |u'| + |\rho'| |u''| \leq E + c |u''|$.
The corresponding term $u''(x)\cdot II$ is therefore bounded by $E |u''| + c |u''|^2$, which is  again absorbed into dissipation. We conclude that the whole term $\aT(\r'', u) u''$ is dominated by dissipative term plus an $E$-source.

It remains to notice that the $\aT(\rho',u')u''$ term is precisely given by the first integral on the right hand side of \eqref{e:aux3}, which has been estimated already. We  arrive at
\[
\ddt |u''|^2 \leq E - c \dD u''(x) \lesssim E -  |u''(x) |^2.
\]
This finishes the proof.
\end{proof}

To proceed, let us note that we  have automatically obtained the uniform bound
\begin{equation}\label{ }
\sup_t  |\L^\a \rho'(\cdot, t)|_\infty <\infty.
\end{equation}

We are now in a position to perform final estimates in the top regularity class $H^3 \times H^{2+\a}$.

\begin{lemma} \label{l:vanslope3} Under the assumptions of Theorem \ref{t:flock-singular}, there are constants $C, \d>0$ such that for all $t>0$ one has
\begin{equation}\label{ }
\begin{split}
|u'''(\cdot,t) |_2 & \leq Ce^{-\d t} \\
|\L^\a \rho''(\cdot,t)|_2 & \leq C.
\end{split}
\end{equation}
\end{lemma}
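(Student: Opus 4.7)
The plan is to establish the two bounds via a coupled energy estimate for $X(t) := |u'''(\cdot,t)|_2^2$ and $Y(t) := |\rho''(\cdot,t)|_2^2$, tied together by the algebraic identity $\L^\a \rho'' = u''' - e''$ (from $e=u'-\L^\a\rho$) and the pointwise bound $|e''| \leq C(1 + |\rho''|)$. The latter follows by bootstrapping \eqref{e:Q} one more level: the ratio $R := Q_x/\rho$ satisfies $\partial_t R + u R_x = 0$, hence $|R|_\infty$ is conserved; expanding $R$ and using the already-established bounds on $\rho, \rho', e, e'$ gives $|(e/\rho)_{xx}|_\infty \lesssim 1$, and the quotient rule then yields $|e''| \leq C + C|\rho''|$. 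Consequently $|\L^\a \rho''|_2 \leq X^{1/2} + C(1+Y^{1/2})$, which reduces the second bound of the lemma to showing that $X$ decays and $Y$ stays bounded.

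For the $X$-estimate I would differentiate the momentum equation three times, multiply by $u'''$ and integrate. The transport term $(uu_x)'''$, after integration by parts, reduces to combinations of $\int u'(u''')^2\,dx$ and $\int(u'')^2 u'''\,dx$, both bounded by $EX + EX^{1/2}$ via \lem{l:vanslope} and \lem{l:vanslope2}. Leibniz gives $\aT(\rho,u)''' = \sum_{k=0}^{3}\binom{3}{k}\aT(\rho^{(k)},u^{(3-k)})$. The $k=0$ piece produces dissipation via the bilinear identity
\[
\int u'''\,\aT(\rho,u''')\,dx = \tfrac{1}{2}\iint u'''(x)(\rho(x+z)-\rho(x))(u'''(x+z)-u'''(x))\frac{dz\,dx}{|z|^{1+\a}} - \tfrac{1}{2}\int \rho\,\dD_\a u'''\,dx,
\]
which combined with $\rho \geq c_0$ and the Poincar\'e bound $|\L^{\a/2} u'''|_2 \geq |u'''|_2$ (valid on $\T$ since $u'''$ has zero mean) gives $\leq -cX + C|u'''|_2^2$. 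The terms $k=1,2$ carry an exponentially small factor $u''$ or $u'$ in the integrand, and a near/far split of the kernel yields $L^2$ bounds $\lesssim E(1+Y^{1/2})$. The critical $k=3$ term exploits $\aT(\rho''',u) = \aT(\rho''', u-\bar{u})$ (the operator annihilates constants in its second slot); since $|u-\bar{u}|_\infty = V = E$ and $|u'|_\infty = E$, an integration by parts in $z$ shifting one derivative off $\rho'''$ onto the kernel, together with a near/far split, produces $|\aT(\rho''',u-\bar{u})|_2 \lesssim E\,|\rho''|_{H^\a} \lesssim E(X^{1/2}+Y^{1/2}+1)$. Collecting and applying Young, I obtain
\[
\dot X + cX \leq EX + E^2(X+Y+1) + E.
\]

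For the $Y$-estimate I would apply $\partial_x^2$ to the mass equation $\rho_t + u\rho' = -e\rho - \rho\L^\a\rho$, multiply by $\rho''$ and integrate. The main obstacle is that the apparent dissipation $-\int \rho\rho''\L^\a\rho''\,dx$ \emph{cancels exactly} with the contribution $-\int e''\rho\rho''\,dx$ upon substituting $e'' = u''' - \L^\a\rho''$, a manifestation of the alignment structure placing dissipation on the velocity rather than the density. What survives is $-2\int u'''\rho\rho''\,dx \leq CX^{1/2}Y^{1/2}$ plus exponentially small forcings, leaving $\dot Y \leq CX^{1/2}Y^{1/2} + EY + E$. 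Hence $Y$ cannot close in isolation; once $X^{1/2}$ decays exponentially, however, direct integration gives $Y(t) \leq C$. The two estimates close by a simultaneous bootstrap on $[t_0,\infty)$ with $t_0$ large enough that the $E$-factors dominate the constants (using \lem{l:vanslope} and \lem{l:vanslope2}); standard Gronwall then produces $X(t) \leq Ce^{-\d t}$ and $Y(t) \leq C$, and substituting back into $|\L^\a \rho''|_2 \leq X^{1/2} + C(1+Y^{1/2})$ finishes the proof.
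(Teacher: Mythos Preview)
Your overall strategy is sound and close in spirit to the paper's, but the implementation differs in one key choice and has two soft spots.

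\textbf{Difference in auxiliary variable.} The paper does not track $Y=|\rho''|_2^2$; it works directly with $|e''|_2^2$ using the pure continuity law $e_t+(ue)_x=0$. Testing the twice-differentiated $e$-equation by $e''$ gives immediately
\[
\tfrac{d}{dt}|e''|_2^2 \;\lesssim\; E|e''|_2^2 + E|e''|_2 + |u'''|_2|e''|_2,
\]
with no fractional terms and no cancellation to chase. This pairs with the $u'''$-estimate $\tfrac{d}{dt}|u'''|_2^2 \le E + E|e''|_2^2 - \int \dD_\a u'''$ to form a clean coupled system. Your route through the mass equation is equivalent (once you observe that the $e,\L^\a\rho$ and $e',\L^\a\rho'$ pairs recombine back to $u',u''$), but the paper's choice bypasses that bookkeeping entirely.

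\textbf{Two gaps.} First, your claim that the $k=1$ piece $\aT(\rho',u'')$ is $\lesssim E(1+Y^{1/2})$ in $L^2$ is not right: the near-zone contribution is controlled only by $|u'''|_2$, giving a term of size $C|u'''|_2^2=CX$ with a \emph{non-small} constant (the paper finds exactly the same term and says so). Consequently, plain Poincar\'e $|\L^{\a/2}u'''|_2\ge c|u'''|_2$ is not enough; you must invoke \lem{l:enhance} in its integrated form, $\int\dD_\a u'''\ge B|u'''|_2^2 - C(B)E$ for arbitrary $B$, to absorb these $CX$ terms. Second, your ``simultaneous bootstrap with standard Gronwall'' is too vague for the coupling $\dot Y \le C X^{1/2}Y^{1/2}+EY+E$, where $C$ is not small. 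The paper closes in two steps: (i) use the arbitrarily large dissipation constant to show $|u'''|_2^2+|e''|_2^2$ has \emph{arbitrarily small} exponential growth, so that $E|e''|_2^2$ remains an $E$-term; (ii) return to the $u'''$-inequality to get genuine exponential decay of $|u'''|_2$; (iii) feed this back into the $e''$-inequality to conclude boundedness. Your system admits the same two-step closure, but it needs to be stated---a direct Gronwall on $X+Y$ does not give decay.
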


\begin{proof}[Proof of \lem{l:vanslope3}]
Let us write the equation for $u'''$:
\begin{equation}\label{e:u'''}
u'''_t + u u'''_x + 4u'u'''+ 3u'' u'' = \aT(\r''',u)+ 3 \aT(\r'',u') + 3\aT(\r',u'') + \aT(\r,u''').
\end{equation}
Testing with $u'''$ we obtain (we suppress integral signs and note that $\int u''u''u''' = 0$)
\begin{equation}\label{e:eb}
\begin{split}
\ddt |u'''|_2^2 &= - 7 u'(u''')^2 + 2 \aT(\r''',u)u''' +6 \aT(\r'',u')u''' + 6 \aT(\r',u''))u''' + 2 \aT(\r, u''')u''' \\
& \leq E |u'''|_2^2 - c_0  \int \dD_\a u''' \dx+2 \aT(\r''',u)u''' +6 \aT(\r'',u')u''' + 6 \aT(\r',u'')u'''.
\end{split}
\end{equation}
Note that $ \int \dD_\a u''' \dx = |u'''|_{H^{\a/2}}^2$.   From \lem{l:enhance} we have the lower bound
\begin{equation}\label{e:DuxxxInt}
\int_\T \dD u''' \dx  \geq B |u'''|_2^2 - C(B) E, \text{ for any } B>0.
\end{equation}
Again, the dissipation absorbs all quadratic terms and linear terms with $E$-coefficient. Manipulations below are much similar to the ones we performed in the proof of \lem{l:vanslope2} So, we proceed straight with computations. We have
\[
\begin{split}
 \aT(\r''',u)u''' & = \int_{\T \times \R} \frac{\rho'''(x+z)(u (x+z)-u(x))  u'''(x)}{|z|^{1+\a}} \dz \dx\\
 &= \int_{\T \times \R} \frac{\rho'''(x+z) z u'(x)  u'''(x)}{|z|^{1+\a}} \dz \dx \\
 &\ \ \ + \int_{\T \times \R} \frac{\rho'''(x+z)(u (x+z)-u(x) - z u'(x) )  u'''(x)}{|z|^{1+\a}} \dz \dx \\
& = \a \int_{\T}   \L^\a \rho''(x) u'(x) u'''(x) \dx +  \int_{\T \times \R} \frac{\rho''(x+z)(u' (x+z)- u'(x) )  u'''(x)}{|z|^{1+\a}} \dz \dx  \\
& \ \ \ +  \int_{\T \times \R} \frac{\rho''(x+z)(u (x+z)-u(x) - z u'(x) )  u'''(x) z }{|z|^{3+\a}} \dz \dx  \\
& \leq  |u'|_\infty | \L^\a \rho''|_2 |u'''|_2 + |\rho''|_2 |u'''|_2 |u'|_\infty + |\rho''|_2 |u'''|_2 |u''|_\infty  \leq E   | \L^\a \rho''|_2 |u'''|_2 \\
&\leq E |e''|_2^2 + E|u'''|_2^2,
\end{split}
\]
where the last term in absorbed into dissipation. Next,
\[
\aT(\r'',u')u''' = \int_{\T \times \R} \frac{\rho''(x+z)(u' (x+z)- u'(x) )  u'''(x)}{|z|^{1+\a}} \dz \dx ,
\]
which is precisely an integral we already estimated below. Finally,
\[
\begin{split}
\aT(\r',u'')u''' & = \int_{\T \times \R} \frac{\rho'(x+z)(u'' (x+z)- u''(x) )  u'''(x)}{|z|^{1+\a}} \dz \dx \\
& =  \int_{\T } \int_{|z|<2\pi} \frac{\rho'(x+z)(u'' (x+z)- u''(x) )  u'''(x)}{|z|^{1+\a}} \dz \dx \\
&+  \int_{\T } \int_{|z|>2\pi} \frac{\rho'(x+z)(u'' (x+z)- u''(x) )  u'''(x)}{|z|^{1+\a}} \dz \dx \\
&\leq |\rho'|_\infty |u'''|_2^2 + |\rho'|_\infty |u''|_\infty |u'''|_2 \leq c |u'''|_2^2 + E |u'''|_2.
\end{split}
\]
This term is entirely absorbed into dissipation.  We have obtained the estimate
\begin{equation}\label{e:uxxx2}
\ddt |u'''|_2^2 \leq E + E   |e''|^2_2  - \int \dD_\a u''' \dx .
\end{equation}
It remains to close with a bound on $e''$:
\begin{equation}\label{e:exx}
\begin{split}
\ddt |e''|_2^2 & \leq  3 u' e''e'' + 2 u'' e' e'' + u''' e e'' \leq E |e''|_2^2 +  E|e''|_2 + |u'''|^2|e''|^2 \\
&\leq E |e''|_2^2 + \frac{E}{\d} + \d |e''|_2^2 + \frac{1}{\d} |u'''|_2^2 + \d |e''|^2_2,
\end{split}
\end{equation}
for every $\d>0$. Combining with \eqref{e:uxxx2} and absorbing all quadratic terms of $|u'''|_2$  we obtain for $X =  |u'''|_2^2 + |e''|_2^2 $:
\begin{equation}\label{ }
\ddt X \leq \frac{E}{\d} + c \d X.
\end{equation}
This implies that the exponential growth rate of $X$ is at most $c\d$, which can be made arbitrarily small. In particular this implies that $|e''|_2$ has arbitrarily small exponential rate. Going back to \eqref{e:uxxx2} we find that $E   |e''|^2_2$ is another $E$-type term, since the $E$ coefficient has a finite negative decay rate. Consequently, we obtain
\begin{equation}\label{e:uxxx3}
\ddt |u'''|_2^2 \leq E - c |u'''|_2^2,
\end{equation}
which proves the result for $|u'''|_2$.  To finish the bound on density we go back to the $e''$-equation with the obtained exponential decay of $u'''$:
\begin{equation}\label{e:exx}
\ddt |e''|_2^2 \leq  3 u' e''e'' + 2 u'' e' e'' + u''' e e'' \leq E(|e''|_2^2 + |e''|_2).
\end{equation}
This readily implies global uniform bound on $|e''|_2$, and hence on $|\rho'''|_2$.
This proves the lemma.
\end{proof}

As a consequence we readily obtain the full statement of \thm{t:flock-singular}. In particular, the convergence for densities stated in \eqref{e:urxxx} follows by interpolation between exponential decay in $L^\infty$ and uniform boundedness in $H^{2+\a}$. The fact that $\bar{\rho} \in H^{2+\a}$ is a simple consequence of uniform boundedness of $\rho(t)$ in $H^{2+\a}$ and weak compactness.


\begin{thebibliography}{10}

\bibitem{CCP2017}
J.~Carrillo, Y.-P. Choi, and S.~Perez.
\newblock A review on attractive-repulsive hydrodynamics for consensus in
  collective behavior.
\newblock In N.~Bellomo, P.~Degond, and E.~Tadmor, editors, {\em Active
  Particles. Advances in Theory, Models, and Applications}, volume~1.
  Birkh\"auser, 2017.

\bibitem{CCTT2016}
J.~A. Carrillo, Y.-P. Choi, E. Tadmor, and C. Tan.
\newblock Critical thresholds in 1{D} {E}uler equations with non-local forces.
\newblock {\em Math. Models Methods Appl. Sci.}, 26(1):185--206, 2016.

\bibitem{CV2012}
P. Constantin and V. Vicol.
\newblock Nonlinear maximum principles for dissipative linear nonlocal
  operators and applications.
\newblock {\em Geom. Funct. Anal.}, 22(5):1289--1321, 2012.

\bibitem{CS2007}
F. Cucker and S. Smale.
\newblock Emergent behavior in flocks.
\newblock {\em IEEE Trans. Automat. Control}, 52(5):852--862, 2007.

\bibitem{DKRT2017}
T.~Do, A.~Kiselev, L.~Ryzhik, and C.~Tan.
\newblock Global regularity for the fractional euler alignment system.
\newblock 2017.
\newblock https://arxiv.org/abs/1701.05155.


\bibitem{HL2009}
S.~Y Ha and J.~G Liu,
\newblock A simple proof of the {Cucker-Smale} flocking dynamics and mean-field
  limit.
\newblock {\em Communications in Mathematical Sciences}, 7(2), (2009) 297-325.

\bibitem{HT2008}
S.-Y. Ha and E. Tadmor.
\newblock From particle to kinetic and hydrodynamic descriptions of flocking.
\newblock {\em Kinet. Relat. Models}, 1(3):415--435, 2008.

\bibitem{KNV2007}
A.~Kiselev, F.~Nazarov, and A.~Volberg.
\newblock Global well-posedness for the critical 2{D} dissipative
  quasi-geostrophic equation.
\newblock {\em Invent. Math.}, 167(3):445--453, 2007.

\bibitem{MT2011}
S. Motsch and E. Tadmor.
\newblock A new model for self-organized dynamics and its flocking behavior.
\newblock {\em J. Stat. Phys.}, 144(5):923--947, 2011.

\bibitem{MT2014}
S. Motsch and E. Tadmor.
\newblock Heterophilious dynamics enhances consensus.
\newblock {\em SIAM Rev.}, 56(4):577--621, 2014.


\bibitem{Pes2015}
J. Peszek. 
\newblock Discrete Cucker-Smale flocking model with a weakly singular weight. 
\newblock {\em SIAM J. Math. Anal.}, 47(5), 3671-3686.

\bibitem{PS2017}
D. Poyato and J. Soler.
\newblock Euler-type equations and commutators in singular and hyperbolic   limits of kinetic Cucker-Smale models, 
\newblock {\em Math. Models Methods Appl. Sci.} 27, 1089 (2017) 1089-1152.

\bibitem{SS2016}
R.~W. Schwab and L. Silvestre.
\newblock Regularity for parabolic integro-differential equations with very
  irregular kernels.
\newblock {\em Anal. PDE}, 9(3):727--772, 2016.

\bibitem{ST2017a}
R.~Shvydkoy and E.~Tadmor.
\newblock Eulerian dynamics with a commutator forcing.
\newblock {\em Trans. Mathematics and Applications} 1(1) (2017) 1-26.

\bibitem{ST2017b}
R.~Shvydkoy and E.~Tadmor.
\newblock Eulerian dynamics with a commutator forcing ii: flocking.
\newblock {\em Discrete and Continuous Dynamical Systems-A}, in press.

\bibitem{S2012}
L. Silvestre.
\newblock H\"older estimates for advection fractional-diffusion equations.
\newblock {\em Ann. Sc. Norm. Super. Pisa Cl. Sci. (5)}, 11(4):843--855, 2012.

\bibitem{TT2014}
E. Tadmor and C.Tan.
\newblock Critical thresholds in flocking hydrodynamics with non-local
  alignment.
\newblock {\em Philos. Trans. R. Soc. Lond. Ser. A Math. Phys. Eng. Sci.},
  372(2028):20130401, 22, 2014.

\end{thebibliography}

\end{document}